\newtheorem{prop}{Proposition}
\newtheorem{thm}[prop]{Theorem}
\newtheorem{lemma}[prop]{Lemma}
\theoremstyle{definition}
\newtheorem{remark}[prop]{Remark}
\newcommand{\Aut}{\operatorname{Aut}}
\newcommand{\Out}{\operatorname{Out}}
\newcommand{\Inn}{\operatorname{Inn}}
\newcommand{\Art}{A}
\newcommand{\Mod}{\operatorname{Mod}}
\renewcommand{\C}{\mathcal{C}}
\newcommand{\Z}{\mathbb{Z}}
\newcommand{\id}{\operatorname{id}}
\newcommand{\prodd}{\operatorname{prod}}
\begin{document}
\author{Matthieu Calvez}
\address{IRISIB, Rue Royale 150, 1000 Bruxelles, Belgique}
\address{Instituto de matem\'aticas\\ 
Universidad de Valpara\'iso\\ 
Gran Breta\~na 1091\\ 3er piso\\ 
Playa Ancha\\ Valpara\'iso\\ Chile}
\email{calvez.matthieu@gmail.com}

\author{Ignat Soroko}
\address{Department of Mathematics\\
Florida State University\\
Tallahassee\\ FL 32306\\ USA}
\email{soroko@math.fsu.edu, ignat.soroko@gmail.com}

\title{Property $R_\infty$ for some spherical and affine Artin--Tits groups}

\keywords{Artin--Tits groups, twisted conjugacy, spherical type, affine type}
\subjclass[2020]{20F36, 20F65, 57K20, 20E36, 20E45, 57M07 (primary)}

\begin{abstract}
Let $n\geqslant2$. In this note we give a short uniform proof of property $R_\infty$ for 
the Artin--Tits groups of spherical types $A_n$, $B_n$, $D_4$, $I_2(m)$ ($m\geqslant3$), their 
pure subgroups, and for the Artin--Tits groups of affine types $\widetilde A_{n-1}$ and $\widetilde C_n$. 
In particular, we provide an alternative proof of a recent result of 
Dekimpe, Gon\c{c}alves and Ocampo, who established property $R_{\infty}$ for pure Artin braid groups. 
\end{abstract}

\date{\today}

\maketitle

\section{Introduction}

Let $G$ be a group and $\varphi$ an automorphism of $G$. An equivalence relation on $G$ is defined by saying that $g,h\in G$ are \emph{$\varphi$-twisted conjugate} if and only if there exists $x\in G$ such that $h=xg \varphi(x)^{-1}$. 
The number of its equivalence classes is the \emph{Reidemeister number} of $\varphi$, denoted by $R(\varphi)$. We say that~$G$ \emph{has property $R_{\infty}$} if $R(\varphi)=\infty$ for all 
$\varphi\in \Aut(G)$. 

The notion of twisted conjugacy classes arises in Nielsen fixed point theory, where under certain natural conditions the Reidemeister number serves as an upper bound for a homotopy invariant called the Nielsen number. Also the twisted conjugacy classes appear naturally in Selberg theory and in some topics of algebraic geometry. See the introduction sections in~\cite{TW11}, \cite{Romankov} and references therein.

Property $R_{\infty}$ has been proved for a number of groups (see \cite{FT15} or \cite{FN16} for a detailed list) among which are Artin braid groups~\cite{FGD10}, some classes of large Artin--Tits groups~\cite{Juhasz}, and, very recently, pure Artin braid groups~\cite{DGO21} and some right-angled Artin--Tits groups~\cite{DS21}.

Recall that a \textit{Coxeter matrix} over a finite set $S$ is a symmetric matrix $(m_{st})_{s,t\in S}$ with entries in $\{1,2,\dots,\infty\}$, such that $m_{ss}=1$ for all $s\in S$ and $m_{st}=m_{ts}\geqslant2$ if $s\ne t$. A Coxeter matrix can be encoded by the corresponding \textit{Coxeter graph} $X$ having $S$ as the set of vertices. Two distinct vertices $s,t\in S$ are connected with an edge in $X$ if $m_{st}\geqslant 3$, and this edge is labeled with $m_{st}$ if $m_{st}\geqslant 4$. The \textit{Artin--Tits group of type $X$} is the group $\Art(X)$ given by the presentation:
\[
\Art(X)=\langle S\mid \prodd(s,t,m_{st})=\prodd(t,s,m_{ts}), \text{ for all } s\ne t,\,m_{st}\ne\infty\rangle,
\]
where $\prodd(s,t,m_{st})$ is the word $stst\dots$ of length $m_{st}\geqslant2$. 
The \textit{Coxeter group $W(X)$ of type $X$} is the quotient of $\Art(X)$ by all relations of the form $s^2=1$, $s\in S$. The most famous example is the Artin braid group on $n$ strands, or Artin--Tits group $\Art(A_{n-1})$, whose corresponding Coxeter group is the symmetric group on $n$ elements. 

The groups $\Art(X)$ and $W(X)$ are called \emph{irreducible} if $X$ is connected. An Artin--Tits group $\Art(X)$ is of \emph{spherical type} if the corresponding Coxeter group $W(X)$ is finite, and of \emph{affine type} if $W(X)$ acts geometrically (i.e.\ properly discontinuously and cocompactly by isometries) on a euclidean space. 

We denote $P(X)$ the kernel of the natural epimorphism $\Art(X)\to W(X)$. It is called the \emph{pure Artin--Tits group of type $X$}. If $X$ is of spherical type, the group $P(X)$ has finite index in $\Art(X)$. The group $P(A_{n-1})$ is also called the \emph{pure Artin braid group on $n$ strands}.

\begin{figure}
\begin{center}
\includegraphics[scale=1.2]{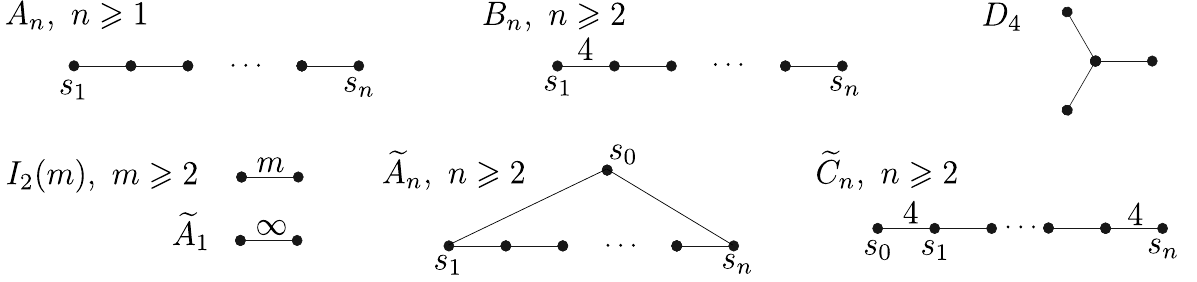}
\end{center}
\caption{The defining Coxeter graphs for the Artin--Tits groups under consideration.}
\label{F:Coxeter}
\end{figure}

In this note we focus on irreducible Artin--Tits groups associated to the Coxeter graphs depicted in Figure~\ref{F:Coxeter} and on the pure Artin--Tits groups corresponding to those of them which are of spherical type. For a group $G$, we denote by $\overline{G}=G/Z(G)$ its quotient by the center. We prove the following theorem.

\begin{thm}\label{T:Main} 
Let $n\geqslant2$. 
\begin{enumerate}
\item Let $X\in \{A_n, B_n, D_4, I_2(m)\, 
(m\geqslant3,m\ne\infty)\}$. 
Then the Artin--Tits group $A(X)$, the pure Artin--Tits group $P(X)$, and their central quotients $\overline{\Art(X)}$ and $\overline{P(X)}$ have property~$R_{\infty}$. 
\item Let $X\in \{\widetilde A_{n-1},\widetilde C_n\}$. Then the Artin--Tits group $A(X)$ has property $R_{\infty}$. 
\end{enumerate}
\end{thm}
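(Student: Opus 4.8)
The plan is to reduce, via two elementary and standard facts, to checking property $R_\infty$ for a finite list of ``distinguished'' automorphisms, and then to dispose of those by a short computation in a two-step nilpotent quotient. First, if $\varphi,\psi\in\Aut(G)$ differ by an inner automorphism then $R(\varphi)=R(\psi)$, so $R$ descends to a function on $\Out(G)$; and if $N\trianglelefteq G$ is characteristic and $\ov\varphi$ is the automorphism induced by $\varphi$ on $G/N$, then the quotient map carries $\varphi$-twisted conjugacy classes \emph{onto} $\ov\varphi$-twisted classes, so $R(\varphi)\geqslant R(\ov\varphi)$. Since the centre is always characteristic and, for $X$ irreducible of spherical type, $Z(\Art(X))$ and $Z(P(X))$ are infinite cyclic, the second fact reduces the assertions of Theorem~\ref{T:Main}(1) about $\Art(X)$ and $P(X)$ to the assertions about the central quotients $\ov{\Art(X)}$ and $\ov{P(X)}$. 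Thus it suffices to prove property $R_\infty$ directly for $\ov{\Art(X)}$ and $\ov{P(X)}$ in the spherical case and for $\Art(X)$ in the affine case, treating all of these uniformly; and by the first fact this means producing, for each class of $\Out$, one finite-order representative $\varphi$ with $R(\varphi)=\infty$.

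Here I would invoke the known descriptions of the outer automorphism groups of these Artin--Tits groups (Charney--Crisp, together with the corresponding results for the pure and the affine groups): apart from $\Art(\widetilde A_1)=F_2$ and $P(A_2)\cong F_2\times\Z$, the group $\Out$ is finite and, modulo inner automorphisms, its elements are represented by diagram automorphisms, by the global inversion $\iota$ inverting every standard generator, and by products of these, all of finite order. The two exceptional groups cause no trouble: $F_2$ has property $R_\infty$ by the known result for non-abelian free groups of finite rank, and $P(A_2)\cong F_2\times\Z$ has centre $\Z$ with $\ov{P(A_2)}\cong F_2$, so it too has property $R_\infty$ by the first paragraph. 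For the trivial class, $R(\id)$ is the number of ordinary conjugacy classes, which is infinite in each case. For a representative $\tau$ coming from a diagram automorphism, $\tau$ merely permutes the standard generators, hence acts on the finitely generated free abelian group $G^{ab}$ --- freely generated by the reflections of $W(X)$ in the pure case (modulo one fixed relation for $\ov{P(X)}$) --- as a permutation of a basis; such a matrix has $1$ as an eigenvalue, so $\id-\tau^{ab}$ has infinite cokernel and $R(\tau)\geqslant R(\tau^{ab})=\infty$.

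The substantive case is that of the ``inversion-type'' representatives, above all $\varphi=\iota$ itself; on $G^{ab}$ it acts as $-\id$ (or, for $\iota$ composed with a diagram automorphism, as $-1$ times a permutation), so no abelian quotient detects that $R(\varphi)=\infty$. The idea is to pass to a finite-index subgroup $P\leqslant G$ which is characteristic and ``pure'', in the sense that $\iota$ and the diagram automorphisms preserve it: one takes $P=P(X)$, resp.\ $P=\ov{P(X)}$, in the spherical case, and $P=\ker\bigl(\Art(X)\to W(X)\to W_0\bigr)$ in the affine case, with $W_0$ the finite Weyl group obtained by killing the translation lattice. Since $P$ is non-abelian and residually torsion-free nilpotent, $\gamma_2P/\gamma_3P$ is a non-zero free abelian group; it is central in the two-step nilpotent quotient $P/\gamma_3P$, and --- being a quotient of $\Lambda^2(P^{ab})$, on which $\iota$ acts through $\Lambda^2(-\id)=\id$ --- the automorphism $\iota$ fixes it pointwise. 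Writing $\psi$ for the automorphism of $P/\gamma_3P$ induced by $\iota$, it follows that every element of $\gamma_2P/\gamma_3P$ lies in its own $\psi$-twisted class (if $c'=xc\psi(x)^{-1}$ with $c,c'\in\gamma_2P/\gamma_3P$, then projecting to $P^{ab}$ gives $(\id+\id)\bar x=0$, so $\bar x=0$ and $c'=c$), whence $R(\psi)$ --- and therefore $R(\iota|_P)$ --- is infinite; alternatively this is a special case of the Gon\c{c}alves--Wong computation of Reidemeister numbers for finitely generated nilpotent groups. One then propagates upward: the $\iota$-twisted class of an element of $P$ stays inside $P$ (its image in $W_0$, resp.\ $W(X)$, is $\bar x\cdot1\cdot\bar x^{-1}=1$, since $\iota$ becomes the identity there), and the finite group $G/P$ merges only finitely many $\iota|_P$-twisted classes into a single $\iota|_G$-twisted class, so $R(\iota)=\infty$. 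Representatives $\iota\tau$ with $\tau$ a non-trivial diagram automorphism are handled by the same two devices: either $\tau$ induces on $P^{ab}$ a permutation with an even cycle, so that $-\tau^{ab}$ has the eigenvalue $1$ and the abelianization of $P$ already suffices, or else the nilpotent argument goes through as for $\iota$.

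The difficulties I expect to meet are bookkeeping ones, concentrated in the pure and affine cases, rather than any new idea. One has to check that $P=\ker(\Art(X)\to W_0)$ is genuinely characteristic, which comes down to verifying that $\iota$ and each diagram automorphism descend to automorphisms of the finite quotient $W_0$ (clear for $\iota$, since it becomes the identity; for the diagram automorphisms it follows from their compatibility with the linear quotient $W(X)\to W_0$), so that the generators of $\Out$ supplied by Charney--Crisp all preserve $P$. One also has to confirm that the finite-index subgroups $P$ occurring here are non-abelian with $\gamma_2P/\gamma_3P\neq0$: for this I would either quote residual torsion-free nilpotence of the relevant (pure) Artin groups, or produce explicitly a surjection of $P$ onto a torsion-free two-step nilpotent group with infinite commutator subgroup, using that each of our Artin groups contains a standard copy of $\Art(A_2)$. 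Finally, for the composites $\iota\tau$ with $\tau\neq\id$ one must, in the branch where the abelianization does not suffice, make sure that the eigenvalue $1$ of $\Lambda^2$ of the relevant permutation actually survives the passage to the quotient $\gamma_2P/\gamma_3P$ --- which requires some control of the infinitesimal (Kohno-type) relations defining that group --- and one must record the precise list of generators of $\Out(P(X))$ for $X\neq A_2$, namely that modulo inner automorphisms every automorphism of $P(X)$ is of diagram-, conjugation-by-$W(X)$-, or inversion-type, so that on $P(X)^{ab}$ it acts as $\pm1$ times a permutation and the dichotomy above applies.
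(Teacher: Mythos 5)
Your reductions are fine: $R(\iota_g\varphi)=R(\varphi)$, the passage to central quotients, and the propagation of $R(\varphi|_P)=\infty$ to $R(\varphi)=\infty$ for a $\varphi$-invariant finite-index subgroup $P$ are all correct, and the $\Lambda^2$-trick showing that the global inversion acts trivially on $\gamma_2P/\gamma_3P$ is a nice observation. But the proof has two genuine gaps, both located exactly where you place the ``bookkeeping''. First, the whole strategy rests on an explicit list of coset representatives of $\Inn$ in $\Aut$ for each of the centerless groups $\overline{\Art(X)}$, $\overline{P(X)}$, $\Art(\widetilde A_{n-1})$, $\Art(\widetilde C_n)$, and the list you posit (diagram automorphisms, global inversion, and their products, acting on the abelianization as $\pm 1$ times a permutation of the standard basis) is not established and is in fact too small. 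Charney--Crisp's computations produce \emph{exotic} outer classes that are not of this form: e.g.\ for $\overline{\Art(B_n)}$ and for the affine types the normalizer picture yields automorphisms permuting the distinguished punctures even though the Coxeter graph has no corresponding symmetry; the closing Remark of the paper itself notes the existence of non-parabolic-preserving automorphisms of $\Art(\widetilde C_n)$, which cannot be products of diagram automorphisms and inversion and whose action on $H_1$ need not be a signed permutation matrix. For $\overline{P(B_n)}$ and $\overline{P(D_4)}$ no explicit description of $\Out$ exists in the literature at all (only finiteness, via Ivanov--Korkmaz together with Cumplido--Paris), and even for $\overline{P(A_n)}$ the outer classes include arbitrary permutations of all $n+2$ punctures of the sphere, whose action on the abelianization involves the sphere relation and is not a permutation of the $A_{ij}$-basis. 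So the dichotomy on which your case analysis rests is unproven and, as stated, false for several of the groups.

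Second, even granting the representatives, the cases you yourself flag as delicate are precisely the substantive ones and are left open: for $\iota\tau$ with $\tau$ a permutation having no even cycle you need the eigenvalue $1$ of $\Lambda^2(\tau^{ab})$ to survive in $\gamma_2P/\gamma_3P$, which requires control of the Kohno-type relations; such lower-central-series information is available for pure braid groups (and doing this carefully is essentially the content of the Dekimpe--Gon\c{c}alves--Ocampo paper this note reproves), but nothing of the sort is cited or known for $P(B_n)$, $P(D_4)$, or for your affine kernels $\ker(\Art(X)\to W_0)$, for which even torsion-freeness of the abelianization and non-vanishing of $\gamma_2P/\gamma_3P$ (let alone residual torsion-free nilpotence) would have to be proved. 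By contrast, the paper avoids any knowledge of representatives of $\Out$ or of nilpotent quotients: it embeds $\Gamma\leqslant\Gamma_\varphi\leqslant\Aut(\Gamma)$ as finite-index subgroups of $\Mod^{\pm}(\Sigma_\Gamma)$ for an \emph{arbitrary} automorphism $\varphi$, and gets $R(\varphi)=\infty$ uniformly from the non-elementary action on the curve complex via Delzant's lemma and Proposition~\ref{P:Cosets}. As it stands, your proposal is a plausible program whose hardest steps are asserted rather than proved.
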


As is well-known (see for example \cite{Romankov}), free abelian groups do not have property $R_\infty$; therefore $\Art(A_1)\simeq P(A_1)\simeq\Z$ and $\Art(I_2(2))\simeq P(I_2(2))\simeq \Z^2$ do not have property~$R_{\infty}$.

Our proof is uniform and is an application of the strategy implemented in~\cite{FGD10} for $\Art(A_n)$ to all other groups of Theorem~\ref{T:Main} (except 
$A(I_2(m)),  P(I_2(m))$, and $A(\widetilde A_1)$, which are virtually free). 
Let~$G$ be one of these groups and let $\Gamma=G/Z(G)$. We use the fact that the groups $\Gamma$ and $\Aut(\Gamma)$ are finite index subgroups of the extended mapping class group of a suitable punctured surface to deduce that the ``mapping torus" of any automorphism $\varphi$ of $\Gamma$ acts non-elementarily on the curve complex of that surface. This, combined with a result of Delzant (Lemma~\ref{L:Delzant}) and Proposition~\ref{P:Cosets}, allows to distinguish infinitely many twisted conjugacy classes of $\varphi$.

In particular, we obtain a short independent proof of a recent theorem of Dekimpe--Gon\c{c}alves--Ocampo \cite{DGO21} who established property $R_\infty$ for the pure Artin braid groups $P(A_n)$. For all other groups in Theorem~\ref{T:Main} (except $A(A_n)$ and $P(A_n)$) our result is new, to the best of our knowledge.

\subsection*{Acknowledgments}
The authors are grateful to Karel Dekimpe for his encouragement to write this note and to Peter Wong for a useful discussion. The authors would also like to thank the anonymous referee for his or her valuable suggestions which improved the quality of the paper. The first author was supported by the EPSRC New Investigator Award EP/S010963/1 and FONDECYT 1180335. The second author acknowledges support from the AMS--Simons Travel Grant. 

\section{Preliminary results}\label{S:Sketch}

In this section we collect preliminary results which we will use to prove Theorem~\ref{T:Main} in Section~\ref{S:Proof}. 

Recall that the center of an irreducible Artin--Tits group $A(X)$ (and its corresponding pure Artin--Tits group $P(X)$) of spherical type is cyclic, see~\cite[Th.\,7.2]{BrieskornSaito} (and \cite[Cor.\,7]{CP19}, respectively). On the other hand, the center of an irreducible Artin--Tits group of affine type is trivial, see~\cite[Prop.\,11.9]{McCammondSulway}. 
It is known (and is easy to prove) that the central quotient $\overline{\Art(X)}$ has trivial center itself, which is also true for $\overline{P(X)}$~\cite[Cor.\,7]{CP19}.

The following Proposition allows to reduce the proof of Theorem~\ref{T:Main} about $\Art(X)$ and $P(X)$ to proving a similar statement for their central quotients $\overline{\Art(X)}$ and $\overline{P(X)}$, respectively. 

\begin{prop}[See e.g.~\protect{\cite[(2.2)]{FGD10}}]\label{prop:epi}
Let $G$ be a group, $\varphi$ be an automorphism of $G$, and $H$ be a normal $\varphi$-invariant subgroup of $G$. Denote by $\bar\varphi$ the automorphism induced by $\varphi$ on $G/H$. Then the condition $R(\bar\varphi)=\infty$ implies $R(\varphi)=\infty$. In particular, if $G/Z(G)$ has property $R_{\infty}$, then so does $G$. \qed 
\end{prop}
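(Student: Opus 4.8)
The plan is to exploit the canonical projection $\pi\colon G\to G/H$ and show that it induces a \emph{surjection} from the set of $\varphi$-twisted conjugacy classes of $G$ onto the set of $\bar\varphi$-twisted conjugacy classes of $G/H$; this yields $R(\varphi)\geqslant R(\bar\varphi)$, and the first assertion follows at once.

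Concretely, I would proceed in three short steps. \emph{(i)} Since $H$ is normal and $\varphi$-invariant, $\varphi$ descends to a well-defined endomorphism $\bar\varphi$ of $G/H$ satisfying $\pi\circ\varphi=\bar\varphi\circ\pi$; applying the same reasoning to $\varphi^{-1}$ shows $\bar\varphi\in\Aut(G/H)$. \emph{(ii)} If $h=xg\varphi(x)^{-1}$ in $G$ for some $x\in G$, then applying $\pi$ and using the intertwining relation gives $\pi(h)=\pi(x)\,\pi(g)\,\bar\varphi(\pi(x))^{-1}$ in $G/H$; hence $\pi$ carries the $\varphi$-twisted conjugacy class of each $g\in G$ into the $\bar\varphi$-twisted conjugacy class of $\pi(g)$, so there is a well-defined map $\bar\pi$ between the two sets of classes. \emph{(iii)} As $\pi$ is onto, every element of $G/H$ equals $\pi(g)$ for some $g\in G$, so the class of $g$ maps to the class of $\pi(g)$; thus $\bar\pi$ is surjective and $R(\varphi)\geqslant R(\bar\varphi)$. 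In particular $R(\bar\varphi)=\infty$ forces $R(\varphi)=\infty$.

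For the final sentence I would apply this with $H=Z(G)$: the center is a characteristic subgroup, hence normal and invariant under every automorphism of $G$, so each $\varphi\in\Aut(G)$ induces an automorphism $\bar\varphi$ of $G/Z(G)$. If $G/Z(G)$ has property $R_\infty$, then $R(\bar\varphi)=\infty$ for every such $\varphi$, whence $R(\varphi)=\infty$ for all $\varphi\in\Aut(G)$, i.e.\ $G$ has property $R_\infty$.

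I expect no genuine obstacle here: the argument is purely formal. The only point meriting a line of justification is step \emph{(ii)}, namely that $\pi$ respects the two equivalence relations so that $\bar\pi$ is well defined on classes rather than merely on elements; surjectivity of $\bar\pi$ is then inherited directly from that of $\pi$.
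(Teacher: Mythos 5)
Your proof is correct. The paper itself gives no argument for this proposition (it is stated with a reference to \cite[(2.2)]{FGD10} and a \qed), and your three-step argument---that the projection $\pi\colon G\to G/H$ sends $\varphi$-twisted classes into $\bar\varphi$-twisted classes and induces a surjection on the sets of classes, so $R(\varphi)\geqslant R(\bar\varphi)$, together with the observation that $Z(G)$ is characteristic---is exactly the standard proof the paper is implicitly invoking.
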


Recall that for a group $G$, $\Inn(G)$ denotes the subgroup of $\Aut(G)$ consisting of all inner automorphisms. For $g\in G$, we denote by $\iota_g$ the inner automorphism  $x\mapsto gxg^{-1}$. The map $\iota: g\mapsto \iota_g$ identifies $\Inn(G)$ with $G/Z(G)$ and $\Inn(G)$ is a normal subgroup of $\Aut(G)$. The \emph{outer automorphism group} of $G$ is the quotient $\Out(G)=\Aut(G)/\Inn(G)$. 

Now, let us review an interesting relationship between the twisted conjugacy classes of automorphisms of a centerless group $G$ and the actual conjugacy classes in an extension of $G$. 
Let $\varphi$ be an automorphism of a centerless group~$G$, let $m\in\{1,2,3,\dots,\infty\}$ 
be the order of $\varphi$ in $\Out(G)$; if $m<\infty$, let $p\in G$ be the unique element such that $\varphi^m=\iota_p$. 
Consider the group
\[
G_\varphi=
\begin{cases} 
G*\langle t\rangle/\langle tgt^{-1}=\varphi(g) \text{ for all }g\in G\,\rangle,  & \text{if $m=\infty$,}\\
G*\langle t\rangle/\langle tgt^{-1}=\varphi(g) \text{ for all }g\in G,\,t^m=p\,\rangle,  & \text{if $m<\infty$.} 
\end{cases}
\]
Note that if $m=\infty$, $G_\varphi$ is the semidirect product $G\rtimes_\varphi\Z$, and if $m=1$, $G_\varphi$ is isomorphic to $G$.

\begin{lemma}\label{L:Embeds}
Let $G$ be a centerless group, $\varphi\in\Aut(G)$, and $G_\varphi$ defined as above. Then $G$ is a normal subgroup of $G_\varphi$, the quotient $G/G_{\varphi}$ is cyclic, and the assignment $g\mapsto \iota_g$ and $t\mapsto \varphi$ defines an injective homomorphism from $G_{\varphi}$ to $\Aut(G)$. 
\end{lemma}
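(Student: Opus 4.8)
The plan is to verify the three assertions in turn, all of them by direct manipulation of the presentation of $G_\varphi$. First I would check that $G$ is normal in $G_\varphi$: the defining relations $tgt^{-1}=\varphi(g)$ say precisely that conjugation by $t$ preserves $G$, and conjugation by any element of $G$ obviously preserves $G$, so the normal generators $t^{\pm1}$ and all of $G$ act on $G$ by (outer or inner) automorphisms; hence $G\triangleleft G_\varphi$. For the quotient, I would observe that killing the normal subgroup $G$ in the presentation of $G_\varphi$ collapses every defining relation to a triviality in the $m=\infty$ case, leaving $\langle t\rangle\cong\Z$, and in the $m<\infty$ case additionally imposes $t^m=1$ (since $p\in G$ becomes trivial), leaving $\langle t\mid t^m\rangle\cong\Z/m\Z$. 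Either way $G_\varphi/G$ is cyclic.

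Next I would construct the homomorphism $\Phi\colon G_\varphi\to\Aut(G)$ by sending $g\mapsto\iota_g$ for $g\in G$ and $t\mapsto\varphi$. To see that this respects the relations of $G_\varphi$, I must check: (i) $\Phi$ restricted to $G$ is the homomorphism $\iota$, which is standard; (ii) for each $g\in G$, $\Phi(t)\Phi(g)\Phi(t)^{-1}=\varphi\iota_g\varphi^{-1}=\iota_{\varphi(g)}=\Phi(\varphi(g))$, the middle equality being the elementary identity $\varphi\circ\iota_g\circ\varphi^{-1}=\iota_{\varphi(g)}$; and (iii) in the case $m<\infty$, that $\Phi(t)^m=\varphi^m=\iota_p=\Phi(p)$, which holds by the defining choice of $p$. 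By von Dyck's theorem this data determines a homomorphism $\Phi\colon G_\varphi\to\Aut(G)$.

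Finally I would prove injectivity of $\Phi$. Here I would use the normal subgroup $G\triangleleft G_\varphi$ and the cyclic quotient: it suffices to show that $\Phi|_G$ is injective and that $\Phi$ does not identify any nontrivial coset of $G$ with the trivial one beyond what $\Phi|_G$ already sees. Injectivity of $\Phi|_G=\iota$ is exactly the hypothesis that $G$ is centerless. For the rest, suppose $w\in\ker\Phi$; its image in $G_\varphi/G\cong\langle\bar t\rangle$ is $\bar t^{\,k}$ for some $k$, and since $\Phi(t)=\varphi$ maps to the class of $\varphi$ in $\Out(G)$, which has order $m$, we get $m\mid k$ (interpreting this as $k=0$ when $m=\infty$). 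Thus $w$ already lies in $G$ (after absorbing the relation $t^m=p\in G$ in the finite case, or directly in the infinite case), and then $w\in\ker(\Phi|_G)=\{1\}$.

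The main obstacle I anticipate is the last step: one must be careful that the natural map $G_\varphi/G\to\langle t\rangle$ (resp.\ $\langle t\mid t^m\rangle$) is genuinely an isomorphism and not just a surjection — i.e., that $G$ does not "accidentally" absorb a power of $t$ lower than $t^m$. This is where the precise form of the relation $t^m=p$ (and the fact that $m$ is the \emph{exact} order of $\varphi$ in $\Out(G)$, so no smaller power of $\varphi$ is inner) is essential, and it is the point that needs the cleanest argument.
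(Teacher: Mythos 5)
Your proposal is correct and takes essentially the same route as the paper: define $\Phi$ by checking the defining relations of $G_\varphi$ hold in $\Aut(G)$ (including $\varphi^m=\iota_p$), then get injectivity from centerlessness of $G$ together with the fact that $m$ is the \emph{exact} order of $\varphi$ in $\Out(G)$; your detour through the cyclic quotient and the induced map to $\Out(G)$ is just a repackaging of the paper's normal form $t^kg$. The only point you should make explicit is that the natural map $G\to G_\varphi$ is itself injective (so that ``$G$ is a normal subgroup of $G_\varphi$'' refers to $G$ and not merely to its image); this follows at once from your own observation that composing it with $\Phi$ gives $\iota$, which is injective since $Z(G)=1$ --- precisely the paper's argument in the $m<\infty$ case.
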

\begin{proof}
We prove the second statement first. Indeed, the given assignment $g\mapsto \iota_g$ and $t\mapsto \varphi$ defines a homomorphism since relations in $G_{\varphi}$ hold in $\Aut(G)$: the conjugation by $\varphi$ in $\Aut(G)$ sends the inner automorphism $\iota_g$ for $g\in G$ to $\varphi \iota_g\varphi^{-1} = \iota_{\varphi(g)}$.
Every element of $G_{\varphi}$ can be written as $t^kg$, for some $g\in G$ and 
$k\in\Z$ if $m=\infty$ and $k\in\{0,\ldots,m-1\}$ if $m<\infty$. Then the element $t^kg$ is sent to $\varphi^k \iota_g$, and if $\varphi^k\iota_g=\id_G$, we deduce first $\varphi^k= {\iota_g}^{-1}=\iota_{g^{-1}}$, which forces $k=0$ by definition of $m$. Then since $G$ is assumed to be centerless, $\iota_g=\id_G$ implies that $g=1$ and hence the homomorphism in question is injective.

If $m=\infty$, then $G_\varphi$ is the semidirect product $G\rtimes_\varphi\langle t\rangle$, hence $G$ is a subgroup of $G_\varphi$. If $m<\infty$, we notice that the injective homomorphism $\iota\colon G\to\Aut(G)$, $g\mapsto \iota_g$, factors through the composition $G\to G_\varphi\to\Aut(G)$, and hence the map $G\to G_\varphi$ is itself injective. 
In either case, $G$ is a normal subgroup and $G_{\varphi}/G$ is generated by the coset $tG$.
\end{proof}

\begin{prop}[cf.\,\protect{\cite[Lemma 6.2]{FGD10}}]\label{P:Cosets}
Let $G$ be a centerless group, $\varphi\in\Aut(G)$, and $G_\varphi$ defined as above. Two elements $g,h\in G$ are $\varphi$-twisted conjugate if and only if the elements $gt$ and~$ht$ of $G_{\varphi}$ are conjugate in $G_{\varphi}$. In particular, 
$R(\varphi)=\infty$ if and only if the coset $Gt$ in $G_{\varphi}$ contains infinitely many conjugacy classes.
\end{prop}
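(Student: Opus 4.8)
The plan is to use the single family of defining relations of $G_\varphi$, namely $tgt^{-1}=\varphi(g)$ (equivalently $tx^{-1}=\varphi(x)^{-1}t$ for $x\in G$), to convert a twisting datum into a conjugator in $G_\varphi$ and back.

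First I would dispatch the forward implication by a one-line computation. If $g,h\in G$ are $\varphi$-twisted conjugate, choose $x\in G$ with $h=xg\varphi(x)^{-1}$; then in $G_\varphi$ one has $x(gt)x^{-1}=xg\,(tx^{-1})=xg\varphi(x)^{-1}t=ht$, so $gt$ and $ht$ are conjugate in $G_\varphi$.

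For the converse I would first invoke Lemma~\ref{L:Embeds}: since $G$ is normal in $G_\varphi$ and $G_\varphi/G$ is generated by the coset of $t$, every element of $G_\varphi$ can be written as $xt^k$ with $x\in G$ and $k\in\Z$. Given $w\in G_\varphi$ with $w(gt)w^{-1}=ht$, write $w=xt^k$ and compute, using $t^kg\,t^{-k}=\varphi^k(g)$ and then pushing the leftover $t$ past $x^{-1}$, that $w(gt)w^{-1}=\bigl(x\varphi^k(g)\varphi(x)^{-1}\bigr)t$. Cancelling $t$ on the right gives $h=x\varphi^k(g)\varphi(x)^{-1}$, so $h$ is $\varphi$-twisted conjugate to $\varphi^k(g)$. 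To finish, it remains to show that $\varphi^k(g)$ is $\varphi$-twisted conjugate to $g$ for every $k\in\Z$; since $\varphi$-twisted conjugacy is an equivalence relation, this reduces by induction on $|k|$ to the cases $k=\pm1$, and the case $k=1$ follows from the identity $\varphi(g)=g^{-1}\,g\,\varphi(g^{-1})^{-1}$ (i.e.\ take $x=g^{-1}$), while $k=-1$ is obtained by applying this with $\varphi^{-1}(g)$ in place of $g$. Chaining the resulting twisted conjugacies shows $h$ is $\varphi$-twisted conjugate to $g$, completing the converse.

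The ``in particular'' statement is then purely formal: the bijection $g\mapsto gt$ from $G$ onto the coset $Gt$ carries the partition of $G$ into $\varphi$-twisted conjugacy classes onto the partition of $Gt$ by intersection with conjugacy classes of $G_\varphi$, so $R(\varphi)$ equals the number of conjugacy classes of $G_\varphi$ meeting $Gt$, which is infinite precisely when $Gt$ meets infinitely many conjugacy classes. I do not expect any genuine difficulty here; the only step that is not a bare manipulation of the relation $tgt^{-1}=\varphi(g)$ is the reduction from $\varphi^k(g)$ back to $g$ in the converse direction, i.e.\ the observation that $\varphi$-twisted conjugacy is insensitive to applying powers of $\varphi$, and that is exactly what the explicit identity above provides.
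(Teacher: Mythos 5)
Your proposal is correct and follows essentially the same route as the paper: a one-line computation for the forward direction, and for the converse writing the conjugator as a $G$-element times a power of $t$ to get $h$ twisted conjugate to $\varphi^k(g)$, then observing that every element is $\varphi$-twisted conjugate to its $\varphi$-image. The only differences are cosmetic (conjugator written as $xt^k$ instead of $t^kx$, and a slightly different but equivalent identity witnessing $g\sim_\varphi\varphi(g)$).
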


\begin{proof}
Suppose that $g,h\in G$ are $\varphi$-twisted conjugate. Then there exists some $x\in G$ such that $h=xg\varphi(x)^{-1}$. In $G_{\varphi}$, we then have
$h=xgtx^{-1}t^{-1}$, which is to say that $ht=x(gt)x^{-1}$. Conversely, suppose that $gt$ and $ht$ are conjugate in $G_{\varphi}$. Then there is some 
element $t^kx$, with $x\in G$ and $k\in\Z$ if $m=\infty$, and $k\in \{0,\ldots,m-1\}$ if $m<\infty$, such that $ht=t^k x\cdot gt \cdot x^{-1}t^{-k}$, or $ht^{k+1}x = t^k xgt$. 
Using the relation $t^k \gamma = \varphi^k(\gamma)t^k$ for $\gamma\in G$, we obtain 
$h\varphi^{k+1}(x)t^{k+1}= \varphi^k(xg)t^{k+1}$. Canceling $t^{k+1}$ on the right and setting $y=\varphi^k(x)$, we obtain 
$h\varphi(y)=y \varphi^k(g)$, or $h = y \varphi^k(g)\varphi(y)^{-1}$. This is to say that $h$ and $\varphi^k(g)$ are $\varphi$-twisted conjugate. To conclude, it suffices to know that 
$\varphi^k(g)$ is $\varphi$-twisted conjugate to $g$: in general, for every automorphism $\psi$, any $\gamma\in G$ is $\psi$-twisted conjugate to its image $\psi(\gamma)$ according to the equality
$\gamma = \gamma\psi(\gamma)\psi(\gamma)^{-1}$. 
\end{proof}

In particular, we recover from Proposition~\ref{P:Cosets} the well-known fact that for any inner automorphism $\varphi$, the number of $\varphi$-twisted conjugacy classes $R(\varphi)$ equals the number of conjugacy classes in $G$ (see e.g.\,\cite[Lemma~1.3]{Tro19} for $\varphi=\id$).

Let $G$ be a group acting on a Gromov hyperbolic space $S$. For any $s\in S$ consider the set~$\Lambda(G)$ of accumulation points of the orbit $Gs$ on the boundary $\partial S$. The action of $G$ on $S$ is called \emph{non-elementary}, if $\Lambda(G)$ contains at least three points, see~\cite[Sec.\,3]{Osi16}.

The main tool for our proof is the following result due to Delzant: 

\begin{lemma}[\protect{\cite[Lemma 6.3]{FGD10},\cite[Lemma 3.4]{LevittLustig}}]\label{L:Delzant}
Let $\Gamma$ be a group acting non-elementarily on a Gromov hyperbolic space, let $K$ be a normal subgroup of $\Gamma$ such that the quotient $\Gamma/K$ is abelian. Then any coset of $K$ contains infinitely many conjugacy classes. \qed 
\end{lemma}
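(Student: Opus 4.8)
The plan is to use the stable translation length for the action on $S$ as a conjugacy invariant and to show that it takes infinitely many values on a single coset of $K$. For an isometry $\gamma$ of $S$ put $\tau(\gamma)=\lim_{n\to\infty}d(x,\gamma^{n}x)/n$; this limit does not depend on $x\in S$, and it is invariant under conjugation in the full isometry group of $S$, hence under conjugation in $\Gamma$. Two easy reductions come first. Since $\Gamma/K$ is abelian, for all $h\in\Gamma$ and all $\gamma\in gK$ we have $h\gamma h^{-1}K=(hK)(\gamma K)(hK)^{-1}=\gamma K$, so the whole $\Gamma$-conjugacy class of $\gamma$ is contained in $gK$; thus $gK$ is a union of conjugacy classes, and it suffices to exhibit elements of $gK$ realizing infinitely many distinct values of $\tau$. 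Moreover a group acting non-elementarily on a hyperbolic space is non-abelian, so $[\Gamma,\Gamma]\neq\{1\}$, and since $\Gamma/K$ is abelian we have $[\Gamma,\Gamma]\leq K$.

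The next step is to note that $K$ itself acts non-elementarily on $S$. Indeed, if $a,b\in\Gamma$ are independent loxodromic isometries then, after replacing them by sufficiently high powers, $[a,b]$ is a nontrivial cyclically reduced word in the free group $\langle a,b\rangle$, hence loxodromic, and it lies in $[\Gamma,\Gamma]\leq K$; passing to two independent such commutators shows $K$ acts non-elementarily (equivalently, one may invoke the standard fact that a normal subgroup with unbounded orbits inherits non-elementarity, with limit set $\Lambda(K)=\Lambda(\Gamma)$). In particular $K$ contains loxodromic elements, and the attracting fixed points of these form an infinite subset of $\partial S$. The analytic core of the argument is the usual contraction--translation estimate of hyperbolic geometry: if $k\in K$ is loxodromic with fixed points $k^{-},k^{+}\in\partial S$ and $g$ is any element with $g(k^{+})\neq k^{-}$, then $gk^{n}$ is loxodromic for all large $n$ and $\tau(gk^{n})\to\infty$ as $n\to\infty$ (heuristically $gk^{n}$ translates by about $n\,\tau(k)$ along a quasi-axis running from $k^{-}$ towards $g(k^{+})$). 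Granting such a $k$, the elements $gk^{n}$ all lie in the coset $gK$ and realize infinitely many values of $\tau$, which finishes the proof.

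The one genuinely delicate point, which I expect to be the main obstacle, is the selection of a loxodromic $k\in K$ with $g(k^{+})\neq k^{-}$; the hypothesis cannot be dropped, since when $g$ interchanges the two endpoints of the axis of $k$ the products $gk^{n}$ can all be elliptic. I would dispose of it by contradiction: if no loxodromic of $K$ worked, then $g$ would send an endpoint of the axis of every loxodromic $k\in K$ back into $\{k^{+},k^{-}\}$. Testing this against several independent loxodromics $k_{1},k_{2},k_{3}\in K$ with pairwise disjoint endpoint pairs, and against the additional loxodromic $k_{1}^{N}k_{2}^{N}$, and comparing the (two-point) endpoint pairs with the single images $g(k_{i}^{+})$, $g(k_{i}^{-})$, one finds that $g$ would have to fix or interchange the endpoints of essentially all loxodromic axes in $K$. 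This rigidifies the action of $\langle K,g\rangle$ on the infinite, $g$-invariant set $\Lambda(K)=\Lambda(\Gamma)$ to the point that the $\Gamma$-action cannot be non-elementary, a contradiction. Hence a suitable $k$ exists and the lemma follows.
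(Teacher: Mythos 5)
The paper does not actually prove Lemma~\ref{L:Delzant}; it quotes it from Dahmani's appendix to \cite{FGD10} and from \cite{LevittLustig}, and your argument is essentially the proof given in those sources: the stable translation length $\tau$ is a conjugacy invariant, the coset $gK$ is a union of conjugacy classes because $\Gamma/K$ is abelian, loxodromic elements of $K$ are obtained from $[\Gamma,\Gamma]\leqslant K$, and one exhibits elements $gk^{n}$ of the coset with $\tau(gk^{n})\to\infty$. That skeleton is correct, and the contraction--translation estimate you invoke (if $g(k^{+})\neq k^{-}$ then $gk^{n}$ is loxodromic for large $n$ with $\tau(gk^{n})\geqslant n\,\tau(k)-C$) is indeed standard.

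Two points need attention. First, your treatment of the ``delicate point'' is vaguer than necessary and aims at the wrong target: no rigidity of the action of $\langle K,g\rangle$ on $\Lambda(K)$ is needed, and ``the $\Gamma$-action cannot be non-elementary'' is not the contradiction that comes out. Argue directly with the ingredient you already name: take independent loxodromics $k_{1},k_{2}\in K$ (to get two inside $K$, note that if $k\in K$ is loxodromic and $h\in\Gamma$ is loxodromic and independent of $k$, then $h^{N}kh^{-N}\in K$ is independent of $k$ for $N$ large). If $g(k_{1}^{+})\neq k_{1}^{-}$ you are done; otherwise set $k=k_{1}^{N}k_{2}^{N}\in K$, which for large $N$ is loxodromic with $k^{+}\to k_{1}^{+}$ and $k^{-}\to k_{2}^{-}$, so $g(k^{+})\to g(k_{1}^{+})=k_{1}^{-}\neq k_{2}^{-}$ while $k^{-}\to k_{2}^{-}$; hence $g(k^{+})\neq k^{-}$ for $N$ large, and the main estimate applies. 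Second, from the very first step your proof assumes the action is of general type, i.e.\ that $\Gamma$ contains two independent loxodromics. With the definition of non-elementary adopted in the paper (at least three limit points, following \cite{Osi16}) this is not automatic: quasi-parabolic (focal) actions are non-elementary in that sense but contain no pair of independent loxodromics, and for them the statement can genuinely fail. For instance, let $\Gamma=\Z[1/2]\rtimes\Z$ act on $\mathbb{H}^{2}$ by $x\mapsto 2^{n}x+b$ and let $K=\Z[1/2]$ be the translation subgroup: $\Gamma/K\cong\Z$, the limit set is all of $\partial\mathbb{H}^{2}$, yet the coset $tK$ (with $t\colon x\mapsto 2x$) is a single conjugacy class, since conjugating $x\mapsto 2x+c$ by the translation by $c$ returns $t$. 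So you should either state the hypothesis as ``general type'' (as the cited sources effectively do, and as holds in the paper's application, where $\Gamma_{\varphi}$ acts acylindrically on a curve complex and visibly contains independent loxodromics) or justify the passage from three limit points to two independent loxodromics for the actions under consideration.
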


This lemma will be used as follows. Let $G$ be a centerless group, $\varphi\in \Aut(G)$ and $G_{\varphi}$ as above, then if $G_{\varphi}$ acts non-elementarily on a Gromov hyperbolic space, 
Lemma \ref{L:Delzant} (with $\Gamma=G_{\varphi}$ and $K=G$) and Proposition \ref{P:Cosets} show that $R(\varphi)=\infty$. 
We will also make use of the following result of Fel'shtyn: 

\begin{thm}[\protect{\cite[Th.\,3]{Fel}}]\label{T:Hyp}
Every non-elementary hyperbolic group has property $R_{\infty}$. \qed 
\end{thm}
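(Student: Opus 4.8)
The plan is to reduce to a centerless group and then feed it straight into the machinery of Section~\ref{S:Sketch}, using one result from outside the paper.

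First I would replace the given non-elementary hyperbolic group $G$ by a better quotient. Let $E\trianglelefteq G$ be the maximal finite normal subgroup (the ``finite radical'' of $G$); it is characteristic. The quotient $\Gamma:=G/E$ is again non-elementary hyperbolic — it is quasi-isometric to $G$ and is not virtually cyclic, since $G$ is not — and it has trivial finite radical, hence trivial center (an infinite center would force virtual cyclicity, while a finite nontrivial one would lie in the finite radical). Since $E$ is characteristic, Proposition~\ref{prop:epi} reduces the theorem to showing that $\Gamma$ has property $R_\infty$. So from now on $G=\Gamma$ is centerless; I fix $\varphi\in\Aut(\Gamma)$ and form the group $\Gamma_\varphi$ of Section~\ref{S:Sketch}.

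The heart of the argument is to verify that $\Gamma_\varphi$ acts non-elementarily on a Gromov hyperbolic space. By Lemma~\ref{L:Embeds}, $\Gamma$ is normal in $\Gamma_\varphi$ with $\Gamma_\varphi/\Gamma$ cyclic; and, arguing exactly as in the proof of that lemma, the centralizer $C_{\Gamma_\varphi}(\Gamma)$ is trivial: if $t^k\gamma$ centralizes $\Gamma$ then $\varphi^k=\iota_{\gamma^{-1}}$ in $\Aut(\Gamma)$, which by the definition of the order of $\varphi$ in $\Out(\Gamma)$ forces $k=0$, and then $\gamma=1$ because $\Gamma$ is centerless. Now every non-elementary hyperbolic group is acylindrically hyperbolic, so $\Gamma$ is an acylindrically hyperbolic normal subgroup of $\Gamma_\varphi$ with trivial centralizer; by a theorem of Minasyan and Osin this forces $\Gamma_\varphi$ itself to be acylindrically hyperbolic, so in particular it admits a non-elementary action on a Gromov hyperbolic space.

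The rest is already in the paper. Applying Lemma~\ref{L:Delzant} with that action of $\Gamma_\varphi$ and with $K=\Gamma\trianglelefteq\Gamma_\varphi$ — whose quotient is cyclic, hence abelian — we find that the coset $\Gamma t$ contains infinitely many conjugacy classes of $\Gamma_\varphi$; Proposition~\ref{P:Cosets} then yields $R(\varphi)=\infty$. As $\varphi$ was arbitrary, $\Gamma$, and therefore $G$, has property $R_\infty$. The step I expect to be the main obstacle is precisely this passage from $\Gamma$ to $\Gamma_\varphi$: when $\varphi$ has infinite order in $\Out(\Gamma)$ the mapping torus $\Gamma\rtimes_\varphi\Z$ need not be hyperbolic (it can contain a $\Z^2$), so one genuinely needs the acylindrical-hyperbolicity input, whose crux is the triviality of $C_{\Gamma_\varphi}(\Gamma)$. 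A more self-contained route, nearer to Fel'shtyn's original proof in~\cite{Fel}, would instead produce a hyperbolic space on which $\Gamma_\varphi$ acts non-elementarily by a Bestvina--Feighn-style combination argument, or would work directly with the action of $\Gamma$ on its boundary $\partial\Gamma$ and its distortion under $\varphi$.
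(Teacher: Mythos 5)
You are trying to prove a statement the paper does not prove at all: Theorem~\ref{T:Hyp} is imported verbatim from Fel'shtyn \cite{Fel}, whose own argument runs through the dynamics of automorphisms on the Gromov boundary (cf.\ Levitt--Lustig \cite{LevittLustig}), so the comparison here is with that external proof, not with anything in Section~\ref{S:Proof}. Much of your plan is fine and is indeed the same machine the paper runs for the Artin--Tits groups: the reduction to the quotient by the finite radical is correct ($E$ is characteristic, $G/E$ is non-elementary hyperbolic and centerless, and Proposition~\ref{prop:epi} applies); the verification that $C_{\Gamma_\varphi}(\Gamma)=1$ is correct and is just the injectivity computation of Lemma~\ref{L:Embeds}; and when $\varphi$ has finite order $m$ in $\Out(\Gamma)$ you need no outside input at all, since then $\Gamma_\varphi$ contains $\Gamma$ with index $m$, hence is itself a non-elementary hyperbolic group acting non-elementarily on its Cayley graph, and Lemma~\ref{L:Delzant} together with Proposition~\ref{P:Cosets} finishes.

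The gap is exactly the case you flag as the crux, $m=\infty$, where everything rests on the assertion ``$N\trianglelefteq Q$ acylindrically hyperbolic with $C_Q(N)$ finite implies $Q$ acylindrically hyperbolic,'' attributed without a precise reference to Minasyan--Osin. I cannot confirm such a theorem, and you should be suspicious of it: since an acylindrically hyperbolic group has finite center, the hypothesis ``$C_Q(N)$ finite'' holds automatically whenever $[Q:N]<\infty$, so the statement you invoke would in particular prove that acylindrical hyperbolicity passes to finite extensions (equivalently, is a commensurability invariant) --- a recognized delicate point in the subject, not a quotable lemma; the implication that is actually in Osin's framework goes the other way (s-normal subgroups of acylindrically hyperbolic groups are acylindrically hyperbolic). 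So as written the hard case of your proof rests on an unavailable citation. What would suffice is acylindrical hyperbolicity of the mapping torus $\Gamma\rtimes_\varphi\Z$ for $\varphi$ of infinite order in $\Out(\Gamma)$; this is known for free and for surface groups (via free-by-cyclic technology and the 3-manifold/JSJ results, respectively), but for a general non-elementary hyperbolic $\Gamma$ it requires the Paulin/Rips--Sela theory of groups with infinite outer automorphism group and the structure of infinite-order outer automorphisms --- which is, in different language, precisely the work done in Fel'shtyn's and Levitt--Lustig's boundary-dynamics approach that you were hoping to bypass. Either pin down an exact reference whose hypotheses match yours, or complete the $m=\infty$ case along those lines.
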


\section{Proof of Theorem~\ref{T:Main}}\label{S:Proof}

\begin{prop}\label{prop:aut}
Let $n\geqslant 3$ and let $X\in \{A_n, B_n, D_4, \widetilde A_{n-1}, \widetilde C_{n-1}\}$. Let $G$ be either the Artin--Tits group $\Art(X)$, or the pure Artin--Tits group $P(A_n)$, $P(B_n)$, $P(D_4)$, and denote $\Gamma=G/Z(G)$. Then there exists a surface $\Sigma_\Gamma$ with punctures such that both $\Gamma$ and $\Aut(\Gamma)$ can be embedded as nested subgroups of finite index in the extended mapping class group of $\Sigma_\Gamma$: 
\[
\Gamma\leqslant\Aut(\Gamma)\leqslant\Mod^{\pm}(\Sigma_\Gamma).
\]
Here $\Gamma$ is identified with the subgroup $\Inn(\Gamma)$ of inner automorphisms. If $g$ and $p$ denote the genus and the number of punctures of $\Sigma_\Gamma$, respectively, then they satisfy the inequality: $3g+p-4>0$.
\end{prop}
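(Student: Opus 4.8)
The plan is to treat each family of Coxeter graphs $X$ in turn and to exhibit, in each case, an explicit punctured surface $\Sigma_\Gamma$ together with known identifications of the relevant Artin--Tits group (or its pure subgroup, or its central quotient) with a finite-index subgroup of a mapping class group. For $X = A_n$ the classical theorem of Dehornoy--Paris and the interpretation of the braid group $\Art(A_n)$ as $\Mod(\Sigma_{0,n+2})$ (the sphere with $n+2$ marked points, or equivalently the $(n+1)$-punctured disk) is the starting point; here $Z(\Art(A_n))$ is generated by the full twist, and $\overline{\Art(A_n)}$ embeds with finite index into $\Mod^{\pm}$ of the corresponding surface. One then invokes the well-known computations of $\Aut(\Mod^{\pm})$ (Ivanov, Korkmaz) showing that, except for a few low-complexity exceptions, $\Mod^{\pm}(\Sigma)$ is its own automorphism group, so that $\Aut(\Gamma)$ also sits inside $\Mod^{\pm}(\Sigma_\Gamma)$ with finite index, and the nesting $\Gamma = \Inn(\Gamma)\leqslant \Aut(\Gamma)\leqslant\Mod^{\pm}(\Sigma_\Gamma)$ follows. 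For $X = B_n$ one uses the identification of $\Art(B_n)$ with the subgroup of $\Mod$ of a punctured disk (or punctured sphere) fixing one distinguished puncture, i.e.\ the annular braid group; for $X = D_4$ one uses the isomorphism type arising from the corresponding Artin--Tits monoid acting on a surface (the $D_4$ Artin group is commensurable with a surface braid group via its description as an index-3 subgroup related to $B_3$ or via the $D_4$--$A_5$ correspondence), and for the affine types $\widetilde A_{n-1}$ and $\widetilde C_n$ one uses the classical facts that $\Art(\widetilde A_{n-1})$ is the annular braid group (braids in the annulus) and $\Art(\widetilde C_n)$ is a braid group of the annulus/Möbius band or a subgroup of $\Mod$ of a twice-punctured disk with extra punctures — in all these cases the group is already centerless, so $\Gamma = G$.

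The key steps, in order, are: (1) fix the surface $\Sigma_\Gamma$ for each $X$ and cite the isomorphism $G\cong$ (finite-index subgroup of $\Mod$ or of $\Mod^{\pm}$) or $\overline{G}\cong$ such a subgroup, reducing via Lemma~\ref{prop:epi} and the preceding remarks to the centerless group $\Gamma$; (2) verify that the surface $\Sigma_\Gamma$ has high enough complexity that Ivanov's theorem applies, so that $\Aut(\Mod^{\pm}(\Sigma_\Gamma)) = \Mod^{\pm}(\Sigma_\Gamma)$, whence every automorphism of $\Gamma$ (a finite-index subgroup) extends to an inner automorphism of $\Mod^{\pm}(\Sigma_\Gamma)$ by the commensurator rigidity of mapping class groups (again Ivanov), giving $\Aut(\Gamma)\leqslant\Mod^{\pm}(\Sigma_\Gamma)$ with finite index; (3) record that $\Inn(\Gamma) \cong \Gamma$ (since $\Gamma$ is centerless) sits inside $\Aut(\Gamma)$, yielding the full nesting; and (4) read off the genus $g$ and puncture number $p$ of each $\Sigma_\Gamma$ from the explicit descriptions and check the arithmetic inequality $3g+p-4>0$ case by case. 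Since in every case $\Sigma_\Gamma$ is planar ($g=0$) — a punctured disk or sphere — step (4) reduces to checking $p\geqslant 5$, which holds precisely because we have assumed $n\geqslant 3$: e.g.\ for $A_n$ the disk has $n+1\geqslant 4$ punctures plus the boundary counted as a puncture gives $p = n+2\geqslant 5$, and similarly for $B_n$, $D_4$, and the affine types one gets $p\geqslant 5$ from $n\geqslant 3$. (This is exactly why the low-rank cases $A_1$, $A_2$, $B_2$, $I_2(m)$, $\widetilde A_1$ are excluded here and handled separately as virtually free groups.)

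The main obstacle will be step (2): producing a clean, citable argument that an automorphism of the finite-index subgroup $\Gamma$ actually extends to an automorphism of the ambient $\Mod^{\pm}(\Sigma_\Gamma)$. For $\Gamma = \Mod$ or $\overline{\Art(A_n)}$ itself this is immediate, but for genuine proper finite-index subgroups (such as pure braid groups $P(A_n)$, or $\Art(B_n)$ viewed inside a larger mapping class group, or $\Art(D_4)$) one must invoke the theorem that the abstract commensurator of $\Mod^{\pm}(\Sigma)$ equals $\Mod^{\pm}(\Sigma)$ (Ivanov, Korkmaz--McCarthy), so that any isomorphism between finite-index subgroups is the restriction of a conjugation inside $\Mod^{\pm}(\Sigma_\Gamma)$; this in particular shows $\Aut(\Gamma)$ is itself (isomorphic to) a subgroup of $\Mod^{\pm}(\Sigma_\Gamma)$ containing $\Gamma$ with finite index. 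A secondary subtlety is choosing, for $D_4$ and for $\widetilde C_n$, the right surface: the $D_4$ Artin group and the $\widetilde C_n$ Artin group are only known to embed in mapping class groups via somewhat indirect routes (e.g.\ the $\widetilde C_n$ Artin group is the subgroup of the braid group of the annulus, i.e.\ $\Mod$ of a disk with $n+2$ marked points symmetric under a hyperelliptic-type involution, or a "braid group of type $\widetilde C$"), and one must make sure the chosen $\Sigma_\Gamma$ is genuinely of complexity $3g+p-4>0$ so that all the rigidity statements apply without exceptional cases. Everything else — the reductions via the center and the final numerical inequality — is routine.
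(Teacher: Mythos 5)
Your overall strategy is the same as the paper's: identify the central quotient $\Gamma$ with a finite-index subgroup of $\Mod^{\pm}$ of a punctured surface (via Charney--Crisp \cite{CC05} for the braid-like types) and then use Ivanov--Korkmaz rigidity for finite-index subgroups to place $\Aut(\Gamma)$ inside $\Mod^{\pm}(\Sigma_\Gamma)$ as the normalizer of $\Gamma$, checking $3g+p-4>0$ at the end. However, there is a genuine gap in your treatment of $D_4$ (and hence also $P(D_4)$). You assert that ``in every case $\Sigma_\Gamma$ is planar'' and propose to realize $\Art(D_4)$ via ``an index-3 subgroup related to $B_3$ or via the $D_4$--$A_5$ correspondence''; no such identification exists, and the planarity claim is false. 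The correct statement, which the paper relies on, is Soroko's theorem that $\overline{\Art(D_4)}$ is isomorphic to the \emph{pure orientation-preserving mapping class group of the torus with three punctures} \cite{Sor20a} (so $g=1$, $p=3$, and $3g+p-4=2>0$), together with the computation $\Aut(\overline{\Art(D_4)})\cong\Mod^{\pm}$ of that surface \cite{Sor20}. Without some concrete finite-index embedding for $\overline{\Art(D_4)}$, your argument simply does not cover two of the cases listed in the proposition, and the specific realization you sketch cannot be repaired into one.

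Two secondary points. First, for the pure subgroups $P(B_n)$ and $P(D_4)$ you treat the reduction as routine, but one genuinely needs the result of Cumplido--Paris \cite[Cor.\,7]{CP19} that $P(X)/Z(P(X))$ embeds as a finite-index subgroup of $\overline{\Art(X)}$ (a priori $Z(P(X))$ need not be the restriction of $Z(\Art(X))$, and the finite-index claim for the central quotient is not automatic); this should be cited, not waved through. Second, your identification of $\Art(\widetilde A_{n-1})$ with ``the annular braid group'' is off: the annular braid group is $\Art(B_n)$, which has infinite cyclic center, whereas $\Art(\widetilde A_{n-1})$ is the centerless subgroup sitting (after passing to the central quotient) with index $n$ in $\overline{\Art(B_n)}$, as in \cite{CC05}. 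This slip does not derail the finite-index conclusion for the affine $A$ case, but as written it is inconsistent with your own remark that the affine groups are centerless. The rigidity step (2) itself, via the commensurator/normalizer theorem packaged in \cite[Cor.\,4]{CC05}, is exactly what the paper does and is fine.
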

\begin{proof}
It is shown in~\cite[Sec.\,2]{CC05} that for $G=\Art(X)$ with $X\in \{A_n, B_n, \widetilde A_{n-1}, \widetilde C_{n-1}\}$, the group $\Gamma=G/Z(G)$ is a subgroup of finite index in the extended mapping class group of a sphere with $n+2$ punctures $\Sigma_\Gamma=S_{n+2}$. Namely, $\overline{\Art(A_{n})}$, $\overline{\Art(B_{n})}$, and ${\Art(\widetilde C_{n-1})}$ are the subgroups of all orientation preserving mapping classes of $S_{n+2}$ that fix $1$, $2$, and $3$ punctures of $S_{n+2}$, respectively. Similarly, ${\Art(\widetilde A_{n-1})}$ is a subgroup of index $n$ in $\overline{\Art(B_n)}$. For $G=P(A_n)$, it is briefly mentioned in~\cite[p.\,324]{CC05} and discussed in detail in~\cite[9.3]{FM12} that $\Gamma=G/Z(G)$ is the group of all orientation preserving mapping classes of $S_{n+2}$ that fix all punctures pointwise, i.e.\ it is the pure orientation preserving mapping class group of the sphere with $n+2$ punctures, which has index $2(n+2)!$ in $\Mod^{\pm}(S_{n+2})$. In general, if $G=P(X)$ is the pure Artin--Tits group for a connected Coxeter graph $X$ of spherical type, it was proven in~\cite[Cor.\,7]{CP19} that $G/Z(G)$ is isomorphic to a subgroup of finite index in $\overline{\Art(X)}$. In particular, for $G=P(B_n)$, the group $\Gamma=G/Z(G)$ is isomorphic to a subgroup of finite index in $\overline{\Art(B_n)}$, and hence to a subgroup of finite index in $\Mod^{\pm}(S_{n+2})$. Notice that for $\Sigma_\Gamma=S_{n+2}$ we have $g=0$, $p=n+2$, so the required inequality becomes: $3g+p-4=n-2>0$, which is true if $n\geqslant3$.

Now we invoke the Corollary to the Ivanov--Korkmaz theorem~\cite[Cor.\,4]{CC05}, and conclude that in all the above cases the group $\Aut(\Gamma)$ is isomorphic to the normalizer of $\Gamma$ in $\Mod^{\pm}(S_{n+2})$. Since $\Aut(\Gamma)$ contains $\Gamma$ as a subgroup (identified with the subgroup $\Inn(\Gamma)$ of all inner automorphisms), and $\Gamma$ has finite index in $\Mod^{\pm}(S_{n+2})$, we conclude that $\Aut(\Gamma)$ has itself finite index in $\Mod^{\pm}(S_{n+2})$. 

If $G=\Art(D_4)$, let $\Gamma=\overline{\Art(D_4)}$ and $\Sigma_\Gamma$ be the torus with three punctures. It was shown in~\cite[Th.\,1]{Sor20a} that $\Gamma$ is isomorphic to the pure orientation preserving mapping class group of $\Sigma_\Gamma$ and in~\cite[Cor.\,6]{Sor20} that $\Aut(\Gamma)$ is isomorphic to the extended mapping class group $\Mod^{\pm}(\Sigma_\Gamma)$, which contains $\Gamma$ as a subgroup of index $12$. For $G=P(D_4)$, we refer to~\cite[Cor.\,7]{CP19} again to conclude that $\Gamma=G/Z(G)$ is isomorphic to a subgroup of finite index in $\overline{\Art(D_4)}$, and hence to a subgroup of finite index in $\Mod^{\pm}(\Sigma_\Gamma)$, with $\Sigma_\Gamma$ being the torus with three punctures. Clearly, the required inequality is satisfied for $\Sigma_\Gamma$ in this case as well: $3g+p-4=3+3-4>0$.
\end{proof}

Now we are ready to prove Theorem~\ref{T:Main}. 

\begin{proof}[Proof of Theorem~\ref{T:Main}]
Let $G$ be any of the groups from the theorem; we are going to prove that its central quotient $G/Z(G)$ has property $R_\infty$. Then Proposition~\ref{prop:epi} will yield the desired conclusion for~$G$.

First, let $X=I_2(m)$, $m\geqslant 3$, $m\ne\infty$. 
It is known that $\overline{\Art(X)}$ is isomorphic to $C_m\star C_2$ if $m$ is odd, and to $C_{m/2}\star \Z$ if $m$ is even, where $C_n$ denotes the cyclic group of order $n$, see e.g.~\cite[Sec.\,5]{CP02}. In any case, $\overline{\Art(X)}$ contains a finite index nonabelian free subgroup and hence is non-elementary hyperbolic. By~\cite[Cor.\,7]{CP19}, the central quotient $\overline{P(X)}$ is isomorphic to a subgroup of finite index in $\overline{A(X)}$, hence also contains a nonabelian free subgroup of finite index, and thus is also non-elementary hyperbolic. (One can actually prove that $\overline{P(I_2(m))}\cong F_{m-1}$, the free group of rank $m-1$, see~\cite[Proof of Lemma\,6.2]{CP19}.) 
Theorem~\ref{T:Hyp} now shows that $\overline{\Art(X)}$ and $\overline{P(X)}$ have property $R_{\infty}$, and hence, by Proposition~\ref{prop:epi}, $\Art(X)$ and $P(X)$ have property~$R_{\infty}$ as well. 

In view of the isomorphisms $A(A_2)\simeq A(I_2(3))$ and $A(B_2)\simeq A(I_2(4))$, the above reasoning covers the groups $A(A_2)$, $A(B_2)$, $P(A_2)$, $P(B_2)$; 
the result for $\Art(\widetilde A_{1})\simeq F_2$ also follows from Theorem~\ref{T:Hyp}.

Suppose now that $n\geqslant3$ and $G=\Art(X)$ for $X\in \{A_n, B_n, D_4, \widetilde A_{n-1}, \widetilde C_{n-1}\}$, or $G=P(A_n)$, $P(B_n)$, or $P(D_4)$. Denote $\Gamma=G/Z(G)$ (recall that for $G=\Art(\widetilde A_{n-1})$ or $\Art(\widetilde C_{n-1})$, $\Gamma$ is isomorphic to $G$). Let $\varphi\in\Aut(\Gamma)$ and consider the group $\Gamma_{\varphi}$ as defined in Section~\ref{S:Sketch}. Then by Lemma~\ref{L:Embeds} and Proposition~\ref{prop:aut}, there exists a surface with punctures $\Sigma_\Gamma$ and a tower of inclusions of finite index subgroups of the extended mapping class group of $\Sigma_\Gamma$:
\[
\Gamma\leqslant\Gamma_\varphi\leqslant\Aut(\Gamma)\leqslant\Mod^\pm(\Sigma_\Gamma).
\]

Now recall that $\Mod^{\pm}(\Sigma_\Gamma)$, hence in particular its subgroup $\Gamma_{\varphi}$, acts by isometries on the curve complex $\C(\Sigma_\Gamma)$ of the surface~$\Sigma_\Gamma$. This curve complex is Gromov hyperbolic by the result of Masur--Minsky \cite[Th.\,1.1]{MM1} when $3g+p-4>0$, where $g$ denotes the genus of the surface $\Sigma_\Gamma$ and $p$ is the number of punctures. 
We claim that the action of $\Gamma_\varphi$ on $\C(\Sigma_\Gamma)$ is non-elementary. 
Indeed, it is known (see for instance~\cite{Bow08} and~\cite[Th.\,1.1]{Osi16}) that $\Mod^{\pm}(\Sigma_{\Gamma})$ contains 
infinitely many independent elements acting loxodromically on $\C(\Sigma_\Gamma)$. (Recall that an element acts loxodromically if its orbit has two accumulation points on the boundary 
and two such elements are independent if the sets of accumulation points of their orbits are disjoint.)
Notice that if an element acts loxodromically, so does any of its powers. Since $\Gamma_\varphi$ has finite index in $\Mod^{\pm}(\Sigma_{\Gamma})$, this implies that~$\Gamma_\varphi$ itself contains infinitely many independent elements acting loxodromically on $\C(\Sigma_\Gamma)$. 
Therefore, the action of $\Gamma_\varphi$ on $\C(\Sigma_\Gamma)$ is non-elementary and, by Proposition~\ref{P:Cosets} and Lemma~\ref{L:Delzant}, $R(\varphi)=\infty$. As~$\varphi$ was arbitrary, this implies that $\Gamma$ has property~$R_{\infty}$, and from Proposition~\ref{prop:epi} we deduce that $G$ has property~$R_{\infty}$. This finishes the proof of Theorem~\ref{T:Main}. 
\end{proof}

\begin{remark}
Given an Artin--Tits group $\Art(X)$,  there is another (in most cases, conjecturally) hyperbolic space with an action of the central quotient $\overline{\Art(X)}$. This is the \emph{graph of irreducible parabolic subgroups} $\mathcal C_{parab}(\Art(X))$ introduced in \cite{CGGMW} and also studied in~\cite{MorrisWright,CalvezCisneros}. Hyperbolicity of $\mathcal C_{parab}(\Art(X))$ was established for $X$ being $A_n$ ($n\geqslant 3$), $B_n$ ($n\geqslant 3$), $\widetilde A_n$ $(n\geqslant2)$, and $\widetilde C_n$ ($n\geqslant 2$), see \cite{CalvezCisneros}. Also, it can be shown using results from~\cite{Sor20} that $\mathcal C_{parab}(\Art(D_4))$ is isomorphic to the curve graph of the three times punctured torus, and hence it is also hyperbolic by Masur--Minsky's theorem. Given a non-inner automorphism $\varphi$ of $\overline{\Art(X)}$,  we found that in some cases, the action of $\overline{\Art(X)}$ extends to an action of $\overline{\Art(X)}_{\varphi}$ on $\mathcal C_{parab}(\Art(X))$; this is for instance the case when $\varphi$ is a parabolic-preserving automorphism. However, we also found that for some non-parabolic-preserving automorphisms of $\Art(\widetilde C_n)$, there is no action of $\Art(\widetilde C_n)_{\varphi}$ on $\mathcal C_{parab}(\Art(\widetilde C_n))$, so this approach does not yield a uniform proof of Theorem \ref{T:Main} as the one above.
\end{remark}


\begin{thebibliography}{CGGW19}
\bibitem[Bow08]{Bow08} Brian H. Bowditch, Tight geodesics in the curve complex, \textit{Invent. Math.} 171 (2008), no. 2,
281--300.

\bibitem[BS72]{BrieskornSaito} Egbert Brieskorn, Kyoji Saito, Artin-Gruppen und Coxeter-Gruppen, \textit{Invent. Math.} 17, (1972), 245--271.

\bibitem[CC21]{CalvezCisneros} Matthieu Calvez, Bruno Cisneros de la Cruz, Curve graphs for Artin--Tits groups of type $B$, $\widetilde A$ and $\widetilde C$ are hyperbolic. \textit{Trans. London Math. Soc.} 8 (1), (2021), 151--173.

\bibitem[CC05]{CC05} Ruth Charney, John Crisp, Automorphism groups of some affine and finite type Artin groups. \textit{Math. Res. Lett.} 12 (2005), no. 2--3, 321--333.

\bibitem[CP02]{CP02} John Crisp, Luis Paris, Artin groups of type B and D. Preprint (2002) \href{https://arxiv.org/abs/math/0210438v1}{arXiv:math/0210438v1}. 

\bibitem[CGGW19]{CGGMW} Mar\'ia Cumplido, Volker Gebhardt, Juan González‐Meneses and Bert Wiest, On parabolic subgroups of Artin‐-Tits groups of spherical type, \textit{Adv. Math.} 352 (2019) 572--610. 

\bibitem[CP22]{CP19} Mar\'ia Cumplido, Luis Paris, Commensurability in Artin groups of spherical type. \textit{Rev. Mat. Iberoam.} 38 (2022), no. 2, 503--526.

\bibitem[DGO21]{DGO21} Karel Dekimpe, Daciberg Lima Gon\c{c}alves, Oscar Ocampo, The $R_{\infty}$ property for pure Artin braid groups. \textit{Monatsh. Math.} 195 (2021), 15--33.

\bibitem[DS21]{DS21} Karel Dekimpe, Peter Senden, The $R_{\infty}$-property for right-angled Artin groups. \textit{Topology Appl.} 293 (2021), 107557.

\bibitem[FM12]{FM12} Benson Farb, Dan Margalit, \textit{A {P}rimer on {M}apping {C}lass {G}roups.}
Princeton Mathematical Series, 49. Princeton University Press, Princeton, NJ, 2012. xiv+472 pp.

\bibitem[Fel04]{Fel} Alexander Fel'shtyn, The Reidemeister number of any automorphism of a Gromov-hyperbolic group is infinite, 
\textit{Zap. Nauchn. Sem. S.-Peterburg. Otdel. Mat. Inst. Steklov. (POMI)} 279 (2001), Geom. i Topol. 6, 229--240, 250; reprinted in \textit{J. Math. Sci. (N.Y.)} 119 (2004), no. 1, 117--123.

\bibitem[FGD10]{FGD10} Alexander Fel'shtyn, Daciberg L. Gon\c{c}alves, Twisted conjugacy classes in symplectic groups, mapping class groups and braid groups. With an appendix written jointly with Fran\c{c}ois Dahmani. \textit{Geom. Dedicata} 146 (2010), 211--223.

\bibitem[FN16]{FN16} Alexander Fel'shtyn, Timur Nasybullov, The $R_{\infty}$ and $S_{\infty}$ properties for linear algebraic groups. \textit{J. Group Theory} 19 (2016), 901--921.

\bibitem[FT15]{FT15} Alexander Fel'shtyn, Evgenij Troitsky, Aspects of the property $R_{\infty}$, \textit{J. Group Theory} 18 (2015), 1021\nobreakdash--1034.

\bibitem[Juh10]{Juhasz} Arye Juhasz, Twisted conjugacy in certain Artin groups, Ischia Group Theory 2010, 175--195.

\bibitem[LL00]{LevittLustig} Gilbert Levitt, Martin Lustig, Most automorphisms of a hyperbolic group have very simple dynamics, \textit{Ann. Sci. Ec. Norm. Sup.}, 33, 4, (2000), 507--517.

\bibitem[MM99]{MM1} Howard Masur, Yair Minsky, Geometry of the complex of curves I: Hyperbolicity, \textit{Invent. Math.} 138 (1999), 103--149.

\bibitem[McCS20]{McCammondSulway} Jon McCammond, Robert Sulway, Artin groups of Euclidean type, \textit{Invent. Math.} 210, (2017), 231--282.

\bibitem[MW21]{MorrisWright} Rose Morris-Wright, Parabolic subgroups in FC-type Artin groups, \textit{J. Pure Appl. Algebra} 225 (2021), no.~1, 106468.

\bibitem[Osi16]{Osi16} Denis Osin, Acylindrically hyperbolic groups. \textit{Trans. Amer. Math. Soc.} 368 (2016), no. 2, 851--888.

\bibitem[Rom11]{Romankov} V. Roman'kov, Twisted conjugacy classes in nilpotent groups, \textit{J. Pure Appl. Algebra} 215 (2011), 664--671.

\bibitem[Sor20]{Sor20a} Ignat Soroko, Linearity of some low-complexity mapping class groups. \textit{Forum Math.} 32 (2020), no. 2, 279--286.

\bibitem[Sor21]{Sor20} Ignat Soroko, Artin groups of types $F_4$ and $H_4$ are not commensurable with that of type $D_4$. \textit{Topology Appl.}  300 (2021), 107770. 

\bibitem[TW11]{TW11} Jennifer Taback, Peter Wong, The geometry of twisted conjugacy classes in wreath products. \textit{Geometry, rigidity, and group actions}, 561--587, Chicago Lectures in Math., Univ. Chicago Press, Chicago, IL, 2011.

\bibitem[Tro19]{Tro19} Evgenij Troitsky, Reidemeister classes in some weakly branch groups. \textit{Russ. J. Math. Phys.}, 
 26 (2019), no. 1, 122--129.
\end{thebibliography}
\end{document}